\theoremstyle{plain}
\newtheorem{thm}{Theorem}[section]
\newtheorem{lem}[thm]{Lemma}
\newtheorem{cor}[thm]{Corollary}
\theoremstyle{definition}
\newtheorem{dfn}[thm]{Definition}
\newtheorem{ex}[thm]{Example}
\newtheorem{rmk}[thm]{Remark}
\numberwithin{equation}{section}
\begin{document}

\title{The converse of a theorem by Bayer and Stillman}

\author{HyunBin Loh}
\address[Loh]{Department of Mathematics, POSTECH, Pohang, Gyungbuk 790-784, R. O. Korea}
\email{hyunbin@postech.ac.kr}

\maketitle

\begin{abstract}
Bayer-Stillman showed that $reg(I) = reg(gin_\tau(I))$ when $\tau$ is the graded reverse lexicographic order. We show that the reverse lexicographic order is the unique monomial order $\tau$ satisfying $reg(I) = reg(gin_\tau(I))$ for all ideals $I$. We also show that if $gin_{\tau_1}(I) = gin_{\tau_2}(I)$ for all $I$, then $\tau_1 = \tau_2$.
\end{abstract}

\section{Introduction}

If we have an ideal $I$ and a monomial term order $\tau$, then there is a Zariski open dense subset $U$ of coordinate transformations where the initial ideal is stable \cite{Ga}. This initial ideal is called the generic initial ideal denoted $gin_\tau(I)$ or simply $gin(I)$ if the monomial order is specified before. It can be shown that the generic initial ideal is a Borel-fixed monomial ideal. Then by the good combinatorial properties of Borel-fixed monomial ideals, we can analyze the structure of $gin(I)$. For example, the minimal free resolution is given by the Eliahou-Kervaire theorem and the regularity is given by the maximum degree of a minimal generator \cite{BaSt}. Also, the Betti numbers of an ideal $I$ are bounded by the Betti numbers of generic initial ideals \cite{Bigatti} \cite{ConcaKos}.

A well known result of Conca on generic initial ideals is that if $I$ is Borel-fixed, then $gin_\tau(I)=I$ for any $\tau$ \cite{ConcaKos}. There are more results on specific monomial ideals \cite{Chardin} \cite{ConcaKos} \cite{MoSo}. In the case where $I$ is not a monomial ideal however, these methods are not directly applicable. In this paper, we generalize the notion of $\tau$-segment ideals in \cite{ConcaSidman}. We show that if $in_\tau(I)$ is a $\tau$-segment ideal, then $gin_\tau(I) =in_\tau(I)$. Here, we do not require $I$ to be a monomial ideal. Consequently, we will construct an ideal which has different generic initial ideals for two given monomial orders. This implies that the generic initial ideals fully characterize monomial term orders.

When regarding the degree complexity of an ideal, the regularity of an ideal is a good invariant. An ideal $I$ is $m$-regular if the $j^{th}$ syzygy module of $I$ is generated in degrees $\le m+j$, for all $j \ge 0$. The regularity of $I$, $reg(I)$, is defined as the least $m$ for which $I$ is $m$-regular \cite{EiGo}. Since the graded Betti numbers are upper-semicontinuous in flat families, we have $reg(in_\tau(I)) \ge reg(I)$ for any $\tau$ \cite{Pa}. In general coordinates and graded reverse lexicographic order(rlex), Bayer and Stillman showed that $reg(in_\textrm{rlex}(I)) = reg(I)$ \cite{BaSt}. In this aspect, rlex is an optimal order for the computation of Gr\"obner Bases. Bayer and Stillman also suggested a method of refining monomial orders by the reverse lexicographic order, which will give faster computation \cite{BaSt2}. We show that for any other monomial order $\tau$ besides rlex, there exists an ideal $I$ such that $reg(gin_\tau(I))> reg(I)$. This implies that the graded reverse lexicographic order is the unique optimal monomial order that gives minimum regularity.

\section*{Acknowledgement}

The author would like to thank his adviser Donghoon Hyeon for teaching the statement of the main theorem, for suggesting a general idea of the proof, and for giving valuable comments to improve the quality of the paper. The author would like to thank Hwangrae Lee for suggesting the idea of lemma 3.6., which helped to shorten the proofs considerably. The author would also like to thank Jeaman Ahn for helpful conversations. The author was supported by the following grants funded by the government of Korea: NRF grant NRF-2013H1A8A1004216.

\section{Notation and Terminology}

Let $S=K[x_1, \dots ,x_n]$ be a polynomial ring over an algebraically closed field $K$ with $char K =0$. Let $\bf{x^\alpha}=$$x_1^{\alpha_1} \dots x_n^{\alpha_n}$ be the vector notation. For a homogeneous ideal $I$, let $\mathcal{G}(I)$ be a Gr\"obner basis of $I$.

In this paper, we assume all monomial orders to be graded multiplicative orders with $x_1>x_2>\dots>x_n$. A monomial order $\tau$ is graded if $deg(f) > deg(g)$ implies $f >_\tau g$. A monomial order $\tau$ is multiplicative if $f >_\tau g$ implies $fh >_\tau gh$. Then $fh >_\tau gh$ also implies $f >_\tau g$. Let rlex denote the graded reverse lexicographic order and lex denote the graded lexicographic order. 

Let $B= \{f_1,\dots,f_k\} \subset S$ be a set and $V=K\langle f_1,\dots,f_k \rangle \in S_d$ be the vector space spanned by $B$. Then, define $in_\tau(B)=\{ in_\tau(f_1),\dots,in_\tau(f_k) \}$ and $in_\tau (V) = K \langle in_\tau(f) | f \in V \rangle$.

\begin{dfn}
Let $M$ be a finitely generated graded $S$-module and
$$0 \rightarrow \oplus_j S(-a_{l_j}) \rightarrow \cdots \rightarrow \oplus_j S(-a_{1_j}) \rightarrow \oplus_jS(-a_{0_j}) \rightarrow M \rightarrow 0$$ be a minimal graded free resolution of $M$. We say that $M$ is $d$-regular if $a_{ij} \le d+i$ for all $i,j$. Let the regularity of $M$, denoted $reg(M)$ by the least $d$ such that $M$ is $d$-regular.
\end{dfn}

\begin{rmk}
The regularity of an ideal $I$ is defined by the minimal free resolution of the following form. $$0 \rightarrow \oplus_j S(-a_{l_j}) \rightarrow \cdots \rightarrow \oplus_j S(-a_{1_j}) \rightarrow \oplus_jS(-a_{0_j}) \rightarrow I \rightarrow 0$$
Then the minimal free resolution of $M=S/I$ follows from that of $I$. $$0 \rightarrow \oplus_j S(-a_{l_j}) \rightarrow \cdots \rightarrow \oplus_j S(-a_{1_j}) \rightarrow \oplus_jS(-a_{0_j}) \rightarrow S \rightarrow S/I \rightarrow 0$$
Hence have $reg(S/I) = reg(I) + 1$. Note that if $I$ has a minimal generator of degree $d$, then $reg(I) \ge d$.
\end{rmk}

\section{Generic initial ideals and $\tau$-segment ideals}

The notion of generic initial ideals was introduced by Galligo \cite{Ga}. He showed that generic initial ideals have a good combinatorial property called Borel-fixedness. Since then, generic initial ideals have been studied extensively in commuative algebra and geometry. We introduce the theorem of Galligo. For a more detailed introduction, see \cite{Eisenbud}.

\begin{dfn}
A monomial ideal $I$ is Borel-fixed if $m \in I$ and $m \frac{x_i}{x_j} \in S$ for $i<j$ implies $m \frac{x_i}{x_j} \in I$.
\end{dfn}

\begin{thm} [Galligo]
For a given ideal $I$ and monomial term order $\tau$, there exists a Zariski open subset $U$ of $GL(V)$ such that $in_\tau(g(I))$ is the same for all $g \in U$. We define $gin_\tau(I):=in_\tau(g(I))$ for $g \in U$.
\end{thm}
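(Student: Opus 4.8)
The plan is to prove the statement one graded piece at a time and then glue. Fix a degree $d$ and set $k := \dim_K I_d$ and $N := \dim_K S_d$. Since $\tau$ is graded, a monomial $m$ of degree $d$ lies in $in_\tau(g(I))$ iff it is the $\tau$-leading term of some element of the subspace $g(I)_d = g(I_d)$, so $in_\tau(g(I))_d = in_\tau\!\big(g(I_d)\big)$, and it suffices to control the subspace $in_\tau(g(I_d)) \subseteq S_d$ as $g$ varies. The group $GL(V)\cong GL_n(K)$ acts on $S_d$ through its $d$-th symmetric power, so $g \mapsto g(I_d)$ is a morphism $\varphi_d \colon GL(V) \to \mathrm{Gr}(k, S_d)$ whose coordinates, in the Pl\"ucker embedding attached to the monomial basis of $S_d$ listed in $\tau$-decreasing order, are polynomials in the matrix entries of $g$.

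Next I would recall how $in_\tau$ of a $k$-dimensional subspace $W$ is read off from these Pl\"ucker coordinates. Writing a basis of $W$ as the rows of a $k \times N$ matrix and passing to reduced row echelon form, the pivot columns are well defined and $in_\tau(W)$ is the span of the corresponding monomials; equivalently, the pivot set is the $\tau$-lexicographically first $k$-subset $\Lambda$ of columns for which the maximal minor $p_\Lambda(W)$ is nonzero. Hence, writing $\langle\Lambda\rangle$ for the span of the monomials indexed by $\Lambda$, the locus $\{W : in_\tau(W) = \langle\Lambda\rangle\}$ equals $\{p_\Lambda \neq 0\}\cap\bigcap_{\Lambda' < \Lambda}\{p_{\Lambda'} = 0\}$ and is locally closed; these loci stratify $\mathrm{Gr}(k, S_d)$.

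Now comes the main point. Because $GL(V)$ is irreducible, so is the closure of the image of $\varphi_d$, and an irreducible variety meets exactly one stratum of a stratification in a dense open subset of itself. Thus, working over the function field $L := K(GL(V))$ with $\eta$ the tautological generic coordinate change, there is a unique $k$-subset $\Lambda_d$ with $in_\tau(\eta(I_d)) = \langle\Lambda_d\rangle$; all $\tau$-lex-earlier minors $p_{\Lambda'}\circ\varphi_d$ vanish identically on $GL(V)$, while $p_{\Lambda_d}\circ\varphi_d$ is a nonzero polynomial in the entries of $g$. Its nonvanishing locus $U_d \subseteq GL(V)$ is therefore dense open, and on $U_d$ one has $in_\tau(g(I_d)) = \langle\Lambda_d\rangle =: J_d$.

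Finally I would glue over all degrees. The assembled $J := \bigoplus_d J_d$ is precisely the monomial ideal $in_\tau(\eta(I))$, so by Noetherianity its minimal generators have degrees $\le D$ for some $D$; set $U := \bigcap_{d \le D} U_d$, still dense open. For $g \in U$ the ideal $in_\tau(g(I))$ contains every minimal generator of $J$ and hence contains $J$; on the other hand $in_\tau(g(I))$ and $J$ have the same Hilbert function, equal to that of $I$, since both a linear change of coordinates and passage to the initial ideal preserve the Hilbert function. Therefore $in_\tau(g(I)) = J$ for all $g \in U$, and we may put $gin_\tau(I) := J$. The part I expect to require the most care is the middle step: justifying cleanly that the generic subspace $\varphi_d(g)$ lands in a single, distinguished stratum cut out by one Zariski-open condition, together with the finiteness input (that $J$ is determined by finitely many of its graded pieces) needed to pass from the degree-wise open sets $U_d$ to one common $U$.
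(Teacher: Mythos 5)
Your proposal is correct, but there is nothing in the paper to compare it against: the paper states this result as Galligo's theorem and cites \cite{Ga} (and \cite{Eisenbud}) without reproducing a proof. Your argument is the standard one (essentially Eisenbud, Ch.~15): reduce to each graded piece $I_d$, view $g \mapsto g(I_d)$ as a morphism to the Grassmannian, characterize $in_\tau$ of a subspace by the $\tau$-first nonvanishing Pl\"ucker coordinate, use irreducibility of $GL(V)$ (equivalently, the generic point over $K(GL(V))$) to get a single distinguished exterior monomial whose coefficient cuts out a dense open $U_d$, and then pass from the degreewise opens to one common $U$ via Noetherianity of the limiting monomial ideal together with constancy of the Hilbert function under coordinate change and under taking initial ideals. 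All of these steps are sound, and the two points you flag as delicate are handled correctly: the earlier minors vanish identically on all of $GL(V)$ by the choice of the first not-identically-vanishing one, and equality of $in_\tau(g(I))$ with $J$ beyond the generator degrees follows from containment plus equal Hilbert functions. It is worth noting that this wedge-power technique is exactly the one the paper itself uses later, in the proof of Lemma 3.6 and of the main theorem, so your argument is consistent in spirit with the paper even though the paper takes Galligo's theorem as a black box.
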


Galligo also showed that generic initial ideals are Borel-fixed. Conca showed the converse: if $I$ is a Borel-fixed ideal, then $gin_\tau(I) = I$ for any $\tau$. We will say that $I$ is in general coordinates in the way that $id \in U$ where $in_\tau(g(I))$ is stable for $g \in U$. However, if $I$ is not a monomial ideal, we cannot use similar methods because there is no concept of Borel-fixedness. Taking the initial ideal also does not work well because syzygy computations are not preserved under coordinate transformations. We extend Conca's results to some non-monomial ideals by introducing the notion of $\tau$-segment ideals. This is a generalization of $Seg_\tau(I)$ introduced in \cite{ConcaSidman} that we do not require the ideal to be a $\tau$-segment in every degree. By definition, a $\tau$-segment ideal is always an ideal where $Seg_\tau(I)$ may not be an ideal. Adopting our definition, we show that if $in_\tau(I)$ is a $\tau$-segment ideal, we have $gin_\tau(I) = in_\tau(I)$.

\begin{dfn}
Let $B = \{f_1,\dots,f_k\}$ be a set of monomials with $deg(f_i) = d_i$. If $g \in B$ for all monomials $g \in S$ such that $deg(g)=d_i$ for some $i$ and $g >_\tau f$ for some $f \in B$, call $B$ a $\tau$-segment. If an ideal $I=(f_1,\dots,f_k)$ is generated by a $\tau$-segment $B=\{f_1,\dots,f_k\}$, then call $I$ a $\tau$-segment ideal.
\end{dfn}

\begin{ex}
Let $S=K[x,y,z]$ and $\bf{w}$$=(10,5,3)$ be a graded weight order with tie breaking by lex. The ideal $I=( x^2, xy, y^5) \subset S$ is a $\bf{w}$-segment ideal generated in degrees $2$ and $5$. The bases of $I_2 = K\langle x^2,xy \rangle$ and $I_5 = K\langle f \ | \ deg(f)=5,  f \ge $$\bf{_w}$ $ xyz^3 \rangle$ are both $\bf{w}$-segments. Note that $I_3, I_4$ are not $\bf{w}$-segments since $y^3 >$$\bf{_w}$ $xyz \in I_3$ and $y^4 >$$\bf{_w}$$,xyz^2 \in I_4$ but $y^3,y^4 \not\in I$.
\end{ex}

When $\tau$ is the graded lexicographic order, the lex-segment ideals have good combinatorial properties \cite{Peeva}. If $I$ is a lex-segment ideal, then the generating set of $I_d$ is a lex-segment for every $d$. There follows a one-to-one correspondence with lex-segment ideals and Hilbert functions satisfying a particular growth criterion by Gotzmann. For $\tau \neq$ lex, there always exists some $d$ where $I_d$ is not a $\tau$-segment. For general $\tau$, the $\tau$-segments and $\tau$-segment ideals have the following property.

\begin{lem}
Let $\tau$ be any graded monomial order. \\ (a) A $\tau$-segment is Borel fixed. \\ (b) A $\tau$-segment ideal is Borel fixed.
\end{lem}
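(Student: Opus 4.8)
The plan is to reduce both parts to a single elementary observation about our monomial orders (graded, multiplicative, with $x_1>x_2>\dots>x_n$): \emph{if $i<j$ and $x_j$ divides a monomial $m$, then $m\,x_i/x_j>_\tau m$, while $\deg(m\,x_i/x_j)=\deg m$.} The degree statement is immediate. For the inequality, $x_i>_\tau x_j$ since $x_1>x_2>\dots>x_n$, and multiplying both sides by the monomial $m/x_j$ — which preserves the inequality because $\tau$ is multiplicative — yields $x_i\cdot(m/x_j)>_\tau x_j\cdot(m/x_j)=m$, i.e.\ $m\,x_i/x_j>_\tau m$. In short, a Borel move $x_j\mapsto x_i$ with $i<j$ strictly raises a monomial in $\tau$ but keeps its degree; the rest is bookkeeping.

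For part (a): let $B$ be a $\tau$-segment, let $m\in B$, and suppose $m':=m\,x_i/x_j\in S$ with $i<j$; I must show $m'\in B$. From $m'\in S$ we get $x_j\mid m$, so the observation gives $m'>_\tau m$ and $\deg m'=\deg m$, which is one of the degrees $d_l$ occurring among the elements of $B$. Thus $m'$ is a monomial of $S$ whose degree is some $d_l$ and which is $\tau$-larger than the element $m$ of $B$; the defining property of a $\tau$-segment then forces $m'\in B$. Hence $B$ is Borel-fixed.

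For part (b): let $I=(f_1,\dots,f_k)$ be generated by the $\tau$-segment $B=\{f_1,\dots,f_k\}$, so $I$ is a monomial ideal. Take a monomial $m\in I$ together with $i<j$ such that $m\,x_i/x_j\in S$. Since $I$ is a monomial ideal generated by $B$, some $f_l$ divides $m$; write $m=f_l\,u$ with $u$ a monomial, and note $x_j\mid m$. If $x_j\mid f_l$, then $f_l\,x_i/x_j\in S$, and part (a) gives $f_l\,x_i/x_j\in B\subseteq I$, whence $m\,x_i/x_j=(f_l\,x_i/x_j)\,u\in I$. If $x_j\nmid f_l$, then $x_j\mid u$, so $u\,x_i/x_j$ is a monomial of $S$ and $m\,x_i/x_j=f_l\,(u\,x_i/x_j)\in I$. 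In either case $m\,x_i/x_j\in I$, so $I$ is Borel-fixed.

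I do not expect a real obstacle here. The only points that need a moment's care are that the Borel move preserves degree — so that the $\tau$-segment hypothesis is actually applicable in part (a) — and that "multiplicative" is used in the direction $f>_\tau g\Rightarrow fh>_\tau gh$, which is exactly the defining property assumed for monomial orders in this paper. Part (b) is then formal given (a): a Borel move on $m=f_l u$ either acts on the generator $f_l$, where (a) applies, or on the cofactor $u$, where it is trivial since $I$ is a monomial ideal.
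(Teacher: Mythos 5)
Your proof is correct and follows essentially the same route as the paper: the same key observation that a Borel move $x_j\mapsto x_i$ ($i<j$) strictly increases a monomial in $\tau$ while preserving degree (you derive it by multiplying $x_i>_\tau x_j$ by $m/x_j$, the paper by cancelling $x_j$ from $x_i f>_\tau x_j f$ — the same multiplicativity property), and the same two-case analysis in (b) according to whether the Borel move acts on the generator or on the cofactor. No gaps; your explicit remark that degree preservation is what makes the $\tau$-segment hypothesis applicable is a point the paper leaves implicit.
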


\begin{proof}
(a) Let $B$ be a $\tau$-segment. Let $f \in B$ and $f \frac{x_i}{x_j} \in S$ for $i<j$. Then we have $f \frac{x_i}{x_j} >_\tau f$ since $x_j f \frac{x_i}{x_j} = x_i f >_\tau x_j f$. By the definition of $\tau$-segments, $f \frac{x_i}{x_j} \in B$. So $B$ is Borel-fixed. \\
(b) Let $I=(f_1, \, f_k)$ be a $\tau$-segment ideal. Suppose $F=h f_t$ is a monomial in $I$ for some $t$ and $F \frac{x_i}{x_j} = h f_t \frac{x_i}{x_j} \in S$ for $i<j$. If $f_t \frac{x_i}{x_j} \in S$, we have $f_t \frac{x_i}{x_j} \in I$ by the definition of $\tau$-segment ideals. Otherwise if $f_t \frac{x_i}{x_j} \not\in S$, we have $h \frac{x_i}{x_j} \in S$. Therefore, $F= h \frac{x_i}{x_j} f_t \in I$.
\end{proof}

Let $in_\tau(I)$ be a $\tau$-segment ideal for a homogeneous ideal $I$. Since $\tau$-segment ideals are Borel-fixed, $in_\tau(I)$ is already in general coordinates. Moreover, if $in_\tau(I)$ is a $\tau$-segment, then $gin_\tau(I) = in_\tau(I)$. This means that $I$ is also in general coordinates.

\begin{lem} \label{thelem}
If $in_\tau(I)$ is a $\tau$-segment ideal, then $gin_\tau(I) = in_\tau(I)$. 
\end{lem}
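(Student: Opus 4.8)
The plan is to show that for a generic change of coordinates $g$, the initial ideal $in_\tau(g(I))$ coincides with $in_\tau(I)$. Write $J = in_\tau(I)$; by hypothesis $J$ is a $\tau$-segment ideal, hence by Lemma 3.5(b) it is Borel-fixed, so $gin_\tau(J) = J$ by Conca's theorem. The key point is therefore to compare $gin_\tau(I)$ with $gin_\tau(J) = J$. Since passing to the initial ideal can only make the Hilbert function-preserving monomial ideal "larger" in the term-order sense, and since $I$ and $J$ have the same Hilbert function (as $J = in_\tau(I)$), it suffices to argue degree by degree that in degree $d$ the space $in_\tau(g(I))_d$ is forced to equal $J_d$.

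Here is the degree-$d$ argument I would carry out. Fix $d$ and let $r = \dim_K I_d = \dim_K J_d$. The monomial space $J_d$ is spanned by the $r$ largest monomials of degree $d$ in the order $\tau$ that lie in $J$ — more precisely, since $J$ is a $\tau$-segment ideal it is generated by a $\tau$-segment $B$, and I would first check that this forces $J_d$ itself to be spanned by a $\tau$-segment set of monomials in each degree $d$ (this uses that multiplying a $\tau$-segment generator by a variable keeps you among the top monomials, together with the Borel-fixed structure). Granting that, $J_d = in_\tau(I)_d$ is exactly the span of the top $r$ monomials of degree $d$. Now for \emph{any} change of coordinates $g$, the space $in_\tau(g(I))_d$ is a monomial space of dimension $r$, and a standard fact (upper-semicontinuity / the "diagonal" argument) gives that $in_\tau(g(I))_d$ is always $\tau$-less than or equal to the span of the top $r$ monomials, i.e. its monomials, listed decreasingly, are termwise $\le$ those of $J_d$. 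But $in_\tau(I)_d = J_d$ already \emph{is} the span of the top $r$ monomials, which is the $\tau$-maximal possible value; and since $gin_\tau(I)_d$ can only be $\ge in_\tau(I)_d$ in this termwise order (specializing $g$ to the identity is allowed on the open set $U$, or rather: the generic value dominates every special value), we get $gin_\tau(I)_d = J_d$. Doing this for all $d$ yields $gin_\tau(I) = J = in_\tau(I)$.

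More carefully, I would phrase the comparison using the "generic" point: on the Zariski-open set $U$ of Galligo's theorem, $in_\tau(g(I))$ is constant and equals $gin_\tau(I)$; for a special $g$, e.g. $g = \mathrm{id}$, we have $in_\tau(g(I)) = in_\tau(I) = J$, and by semicontinuity of the term order under specialization the generic initial ideal is $\tau$-maximal, so $gin_\tau(I) \ge_\tau J$ degreewise. For the reverse inequality, I use that $J$, being a $\tau$-segment ideal, is already the largest monomial ideal (in each degree, among monomial spaces of that dimension) — so no monomial space can strictly exceed it, forcing equality. The main obstacle, and the step deserving the most care, is the combinatorial claim that a $\tau$-segment \emph{ideal} has each graded piece $I_d$ equal to the span of the top $\dim I_d$ monomials of degree $d$: the definition only guarantees the \emph{generators} form a $\tau$-segment, not every degree, so I must show that the ideal generated by a $\tau$-segment, in each degree, still consists of an initial segment of monomials for $\tau$. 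This should follow from the multiplicativity of $\tau$ (a product of top monomials stays near the top) combined with Lemma 3.5, but it requires a genuine argument rather than a one-line appeal, and it is where I expect to spend most of the effort.
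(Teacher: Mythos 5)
Your argument has a genuine gap, and it sits exactly where you predicted you would ``spend most of the effort'': the claim that a $\tau$-segment ideal has \emph{every} graded piece $I_d$ equal to the span of the top $\dim_K I_d$ monomials of degree $d$ is false in general. The paper's definition is deliberately weaker -- it only asks for the segment property in the degrees of the minimal generators, and the paper stresses that this is how it generalizes $Seg_\tau(I)$ of Conca--Sidman (``we do not require the ideal to be a $\tau$-segment in every degree''). A concrete counterexample: in $K[x,y,z]$ with rlex, $I=(x^2,xy)$ is an rlex-segment ideal (its degree-$2$ piece is spanned by the two largest degree-$2$ monomials), but $I_3=\langle x^3,x^2y,xy^2,x^2z,xyz\rangle$ is not an rlex-segment, since $y^3>_{\mathrm{rlex}}x^2z\in I_3$ while $y^3\notin I$. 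Consequently your maximality step (``$J_d$ is the termwise-largest monomial space of its dimension, so the generic value cannot exceed it'') only works in the generator degrees of $in_\tau(I)$; in all other degrees $in_\tau(I)_d$ need not be maximal, and your degree-$d$ argument gives no conclusion there. In the degrees that \emph{are} segments, your semicontinuity/Pl\"ucker-coordinate reasoning (the coefficient of $M_1\wedge\cdots\wedge M_r$ is a polynomial in $g$, nonzero at $g=\mathrm{id}$, hence nonzero generically, and $M_1\wedge\cdots\wedge M_r$ is the largest exterior monomial) is sound and is essentially the paper's argument for those degrees.

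What is missing is how to propagate equality to the remaining degrees, and this is the second half of the paper's proof: if $d$ is not a generator degree of $in_\tau(I)$, then $in_\tau(I)_d=in_\tau(I)_{d-1}S_1$, so by induction on $d$ one gets $gin_\tau(I)_d\supseteq gin_\tau(I)_{d-1}S_1=in_\tau(I)_{d-1}S_1=in_\tau(I)_d$, and since $gin_\tau(I)$ and $in_\tau(I)$ have the same Hilbert function, the inclusion is an equality. Replacing your false ``every degree is a segment'' claim by this inductive step would repair the proof; as written, the proposal cannot be completed. (The appeal to Conca's theorem via $gin_\tau(J)=J$ for $J=in_\tau(I)$ is harmless but does no real work in either version.)
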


\begin{proof}
We shall prove that $gin(I)_d = in(I_d)$ for all $d$. Let $in_\tau(I)$ be a $\tau$-segment ideal with minimal generators in degree $d_1, \dots, d_t$. 

First suppose that $d=d_i$ for some $i$. Let $M_1 > M_2 > \dots$ be the total ordering of degree $d$ monomials with respect to $\tau$. Since $in(I)$ is a $\tau$-segment ideal, we have $in(I)_d = \langle M_1, \dots , M_r \rangle$ for some $r$. Then, $\wedge^r (in(I_d)) = \langle M_1 \wedge \dots \wedge M_r \rangle$. Let $g=[g_{ij}] \in GL(S_1)$ be a coordinate transformation. Then we have $\wedge ^r (g(I))_d = \langle g(M_1) \wedge \dots \wedge g(M_r) \rangle = \langle P_d(g_{11},\dots,g_{nn})$ $M_1 \wedge \dots \wedge M_r$ + lower terms$\rangle$. However, $\wedge^r(in(I_d)) = \langle M_1 \wedge \dots \wedge M_r \rangle$, which is the largest standard exterior monomial in $\wedge^r(S_d)$. This means that the coefficient polynomial $P_d(g_{11},\dots, g_{nn})$ of $M_1 \wedge \dots \wedge M_r$ is nonvanishing for $g=id$. Hence $U_d = \{g |P_d(g_{11},\dots,g_{nn}) \neq 0\}$ is a nonempty Zariski open subset where $in(g(I))$ is stable. Therefore $gin(I)_d = in(I_d)$.

Now let $d \neq d_1, \dots, d_t$. Since there are no Gr\"obner bases of degree $d$, we have $in(I_d) = in(I_{d-1}) S_1$. Then, $gin(I)_d \supset gin(I)_{d-1} S_1 = in(I_{d-1}) S_1 = in(I_d)$. Since $in(I)$ and $gin(I)$ have the same dimension in every degree, we have $gin(I)_d = in(I_d)$. Since $gin(I)_d = in(I_d)$ for every $d$, we conclude that $gin(I) = in(I)$.
\end{proof}

\begin{rmk}
If $in(I)$ is Borel-fixed, $gin(I)$ may differ from $in(I)$. Let $S=K[x,y,z]$ and $I=(x^3,x^2y+xy^2,x^2z)$. Then $in_\mathrm{rlex}(I)=(x^3,x^2y,x^2z,xy^3,xy^2z)$ but $gin_\mathrm{rlex}(I)=(x^3,x^2y,xy^2,x^2z^2)$.
\end{rmk}

Now we have a class of ideals which are already in general coordinates. We use this lemma for the constructions of ideals showing our main results. The following theorem shows that generic initial ideals fully characterize monomial orders.

\begin{thm}
$gin_{\tau_1}(I)=gin_{\tau_2}(I)$ for all ideals $I \subset S$, if and only if $\tau_1 = \tau_2$.
\end{thm}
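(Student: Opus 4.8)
The ``if'' direction is immediate, so I would establish the contrapositive of ``only if'': given $\tau_1 \ne \tau_2$, I will exhibit a homogeneous ideal $I$ with $gin_{\tau_1}(I) \ne gin_{\tau_2}(I)$. Since $\tau_1$ and $\tau_2$ are graded with $x_1 > \cdots > x_n$, they agree on every pair of monomials of different degrees and on all monomials of degree $\le 1$, so there is a least degree $d$ (necessarily $d \ge 2$) on which the induced total orders on the set of $N := \dim_K S_d$ monomials of degree $d$ differ. List these monomials as $M^{(i)}_1 >_{\tau_i} \cdots >_{\tau_i} M^{(i)}_N$ for $i = 1,2$, let $r$ be least with $M^{(1)}_r \ne M^{(2)}_r$, and set $m_1 := M^{(1)}_r$, $m_2 := M^{(2)}_r$, $T := \{M^{(1)}_1, \dots, M^{(1)}_{r-1}\}$; minimality of $r$ forces $T = \{M^{(2)}_1, \dots, M^{(2)}_{r-1}\}$ as well. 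Then $m_1, m_2 \notin T$, $m_1 \ne m_2$, $m_1 >_{\tau_1} m_2$, $m_2 >_{\tau_2} m_1$, the set $T \cup \{m_1\}$ is an initial segment of the degree-$d$ monomials under $\tau_1$, and $T \cup \{m_2\}$ is one under $\tau_2$.

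The ideal I would use is $I := (T) + (m_1 + m_2) + \mathfrak{m}^{d+1}$, where $\mathfrak{m} := (x_1, \dots, x_n)$; thus $I_e = 0$ for $e < d$, $I_d = K\langle T \rangle + K\langle m_1 + m_2 \rangle$ (of dimension $r$), and $I_e = S_e$ for $e > d$. A one-line computation gives $in_{\tau_1}(I_d) = K\langle T, m_1 \rangle$: for a nonzero $v = \sum_{t \in T} c_t\, t + c\,(m_1 + m_2) \in I_d$, every $t$ with $c_t \ne 0$ exceeds both $m_1$ and $m_2$ in $\tau_1$, so $in_{\tau_1}(v)$ is the $\tau_1$-largest such $t$ if one exists and is $m_1$ otherwise, while the reverse inclusion is clear from $T \cup \{m_1 + m_2\} \subseteq I_d$. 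Passing to all degrees, $in_{\tau_1}(I) = (T, m_1) + \mathfrak{m}^{d+1}$; symmetrically $in_{\tau_2}(I) = (T, m_2) + \mathfrak{m}^{d+1}$.

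The one step requiring real care is checking that $in_{\tau_1}(I)$ is a $\tau_1$-segment ideal (and $in_{\tau_2}(I)$ a $\tau_2$-segment ideal); adjoining $\mathfrak{m}^{d+1}$ is precisely the device that makes this routine, since it forces the part in degrees $> d$ to be everything and thereby absorbs the higher-degree Gr\"obner basis elements that would otherwise appear and wreck the segment property --- compare the Remark following Lemma~\ref{thelem}, which shows such elements cannot be ignored. Indeed, $(T, m_1) + \mathfrak{m}^{d+1}$ is generated by $B := (T \cup \{m_1\}) \cup \{\text{all monomials of degree } d+1\}$, and $B$ is a $\tau_1$-segment because its degree-$d$ part is exactly the top $r$ degree-$d$ monomials for $\tau_1$ and its degree-$(d+1)$ part is all of $S_{d+1}$. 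By Lemma~\ref{thelem}, $gin_{\tau_1}(I) = in_{\tau_1}(I) = (T, m_1) + \mathfrak{m}^{d+1}$, and likewise $gin_{\tau_2}(I) = (T, m_2) + \mathfrak{m}^{d+1}$.

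Finally, these two monomial ideals are distinct: their degree-$d$ components are $K\langle T, m_1 \rangle$ and $K\langle T, m_2 \rangle$, and since $m_1 \notin T \cup \{m_2\}$ the monomial $m_1$ lies in $gin_{\tau_1}(I)$ but not in $gin_{\tau_2}(I)$. This contradicts the hypothesis that $gin_{\tau_1}(J) = gin_{\tau_2}(J)$ for all $J$, so $\tau_1 = \tau_2$.
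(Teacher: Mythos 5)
Your proof is correct and takes essentially the same route as the paper: locate the first degree $d$ and position $r$ where the two orders disagree, take the ideal generated by the common initial segment $T$ together with the sum of the two competing monomials, and conclude with Lemma \ref{thelem} that each $gin_{\tau_i}(I)$ contains its own $r$-th monomial. Your only deviation is adjoining $\mathfrak{m}^{d+1}$, which makes $in_{\tau_i}(I)$ a $\tau_i$-segment ideal on the nose so that Lemma \ref{thelem} applies verbatim, whereas the paper invokes the lemma only in degree $d$ (relying on the degree-by-degree exterior-power argument in its proof); this is a harmless, arguably cleaner, variant.
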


\begin{proof}
One way is trivial. For the other way, we show that if $\tau_1 \neq \tau_2$ then there exists some $I$ such that $gin_{\tau_1}(I) \neq gin_{\tau_2}(I)$. 
Let $x_1^d=M_1>_{\tau_1} M_2>_{\tau_1}\dots$ be the total ordering of degree $d$ monomials with respect to $\tau_1$ and $x_1^d=M_1'>_{\tau_2} M_2'>_{\tau_2}\dots$ be the total ordering of degree $d$ monomials with respect to $\tau_2$. Let $k$ be the least integer such that $M_k \neq M_k'$. Define the ideal $I=(M_1,\dots,M_{k-1},M_k+M_k')$. 

By symmetry, it suffices to show that $gin_{\tau_1}(I) = (M_1, \dots, M_{k-1}, M_k)$. We use Buchberger's algorithm on $I$. Since $I$ is generated by degree $d$ homogeneous elements, all syzygies have degree larger than $d$. Then, $in_{\tau_1}(I)_d$ is generated by the initial parts of the degree $d$ Gr\"obner bases. These are just the initial terms of the generators of $I$. Then $in_{\tau_1}(I)_d = \langle M_1,\dots,M_k \rangle$. Since $M_1,\dots,M_k$ are the largest $k$ monomials in degree $d$ with respect to $\tau_1$, $in_{\tau_1}(I_d)$ is a $\tau_1$-segment. By Lemma \ref{thelem}, we have $gin_{\tau_1}(I)_d = in_{\tau_1}(I_d) = \langle M_1,\dots,M_k \rangle$.
\end{proof}

\section{The reverse lexicographic order}

We have $reg(I) = reg(g(I))$ for any ideal $I$ and a coordinate transformation $g \in GL(S_1)$ because the Betti tables of $I$ and $g(I)$ coincide. However, taking the initial ideal does not commute with coordinate transformation because syzygy calculations are not preserved under coordinate transformations. 

Where $reg(I) \le reg(in_\tau(I))$ for any order $\tau$, the following theorem of Bayer and Stillman shows that the graded reverse lexicographic order gives the lowest possible regularity for generic initial ideals.

\begin{thm} [Bayer-Stillman] \cite{BaSt}
If $I$ is a homogeneous ideal, then $reg(I)=reg(gin_\mathrm{rlex}(I))$.
\end{thm}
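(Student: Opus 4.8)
The goal is to show $\mathrm{reg}(I) = \mathrm{reg}(\mathrm{gin}_{\mathrm{rlex}}(I))$. Since $\mathrm{reg}(I) \le \mathrm{reg}(\mathrm{gin}_{\mathrm{rlex}}(I))$ always holds (graded Betti numbers are upper semicontinuous in flat families, and $\mathrm{gin}$ arises as a flat degeneration after a generic coordinate change, which preserves the Betti table), the content is the reverse inequality $\mathrm{reg}(\mathrm{gin}_{\mathrm{rlex}}(I)) \le \mathrm{reg}(I)$. Because we may replace $I$ by $g(I)$ for generic $g$ without changing $\mathrm{reg}(I)$, I would assume throughout that $I$ is in general coordinates, so that $\mathrm{in}_{\mathrm{rlex}}(I) = \mathrm{gin}_{\mathrm{rlex}}(I) =: J$, a Borel-fixed monomial ideal. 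For such a $J$, the Eliahou–Kervaire resolution shows that $\mathrm{reg}(J)$ equals the maximal degree of a minimal generator of $J$. So the theorem reduces to: \emph{if $J$ is the generic initial ideal of $I$ in rlex, then every minimal generator of $J$ has degree at most $\mathrm{reg}(I)$.}

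The key technical device is the behavior of the reverse lexicographic order under passing to a hyperplane section. I would use the defining property of rlex: for a monomial $m$ of degree $d$, one has $x_n \mid m$ if and only if $m$ is rlex-smaller than $x_{n-1}^d$ (more precisely, $x_n$ divides $m$ iff $m <_{\mathrm{rlex}}$ every degree-$d$ monomial in $x_1,\dots,x_{n-1}$). This yields the crucial commutation statement: for $I$ in general coordinates, $\mathrm{in}_{\mathrm{rlex}}(I) : x_n = \mathrm{in}_{\mathrm{rlex}}(I : x_n)$ and $(\mathrm{in}_{\mathrm{rlex}}(I) + (x_n))/(x_n) = \mathrm{in}_{\mathrm{rlex}}\big((I + (x_n))/(x_n)\big)$ inside $K[x_1,\dots,x_{n-1}]$ — that is, taking initial ideals commutes with quotienting by and saturating at the last variable. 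The standard proof of this rests on the observation that $x_n$ is a "generic" linear form after the coordinate change, together with the division property of rlex just stated.

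With that in hand, the argument is an induction on $n$ (the number of variables) combined with the Bayer–Stillman cohomological characterization of regularity, which I would invoke as: $J$ is $m$-regular if and only if $x_n$ is "almost regular" in degrees $\ge m$ on $S/J$ (i.e. the multiplication map $(S/J)_k \xrightarrow{x_n} (S/J)_{k+1}$ is injective for $k \ge m$, equivalently $\big((J:x_n)/J\big)_k = 0$ for $k \ge m$) and the image $\bar J$ of $J$ in $S/(x_n)$ is $m$-regular. Running the induction: $\bar J = \mathrm{in}_{\mathrm{rlex}}(\bar I)$ with $\bar I$ the image of $I$ in the smaller polynomial ring, and $\mathrm{reg}(\bar I) \le \mathrm{reg}(I)$ can be arranged for generic hyperplanes; by induction $\bar J$ is $\mathrm{reg}(I)$-regular. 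Separately, the saturation-commutation gives $(J : x_n)/J$ in degrees $\ge \mathrm{reg}(I)$ is controlled by $(I:x_n)/I$, which vanishes in those degrees because $I$ is $\mathrm{reg}(I)$-regular and $x_n$ is almost regular on $S/I$ for $I$ in general coordinates. Combining the two via the Bayer–Stillman criterion shows $J$ is $\mathrm{reg}(I)$-regular, i.e. $\mathrm{reg}(J) \le \mathrm{reg}(I)$, completing the induction.

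The main obstacle — and the step deserving the most care — is establishing the commutation of $\mathrm{in}_{\mathrm{rlex}}$ with both $(-):x_n$ and reduction mod $x_n$ for $I$ in general coordinates; everything else is bookkeeping with the Eliahou–Kervaire resolution and the cohomological regularity criterion. This commutation is exactly where the special structure of the reverse lexicographic order (as opposed to an arbitrary term order) is used, and it requires knowing that after a generic change of coordinates $x_n$ behaves like a sufficiently general linear form relative to a Gröbner basis of $I$ — a point one must argue carefully, e.g. by showing that for generic $g$ the leading terms of a Gröbner basis of $g(I)$ are unchanged under the further specialization that sets up the hyperplane section, so that no cancellation in initial terms occurs when dividing by or modding out $x_n$.
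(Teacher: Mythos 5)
The paper does not prove this statement at all: it is quoted verbatim from Bayer--Stillman \cite{BaSt} and used as a black box, so there is no internal proof to compare your sketch against. Judged on its own, your outline is the standard argument (the one in \cite{BaSt}, reproduced e.g.\ in Eisenbud's Chapter 15): reduce to $reg(gin_{\mathrm{rlex}}(I))\le reg(I)$ via upper semicontinuity, identify $reg$ of the Borel-fixed ideal $gin_{\mathrm{rlex}}(I)$ with its maximal generator degree via Eliahou--Kervaire, and run the hyperplane-section induction through the cohomological/almost-regularity criterion together with the special behaviour of rlex with respect to the last variable. Two remarks. First, you slightly misplace where genericity is needed: the commutation statements $in_{\mathrm{rlex}}(I+(x_n))=in_{\mathrm{rlex}}(I)+(x_n)$ and $in_{\mathrm{rlex}}(I:x_n)=in_{\mathrm{rlex}}(I):x_n$ are unconditional, purely order-theoretic properties of rlex (they follow from the fact that $x_n$ divides a monomial $m$ of degree $d$ exactly when $m$ is rlex-smaller than every degree-$d$ monomial in $x_1,\dots,x_{n-1}$); no ``no cancellation of leading terms under specialization'' argument is required there. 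Genericity of coordinates enters elsewhere, namely to make $x_n$ almost regular on $S/I$ (so that $\bigl((I:x_n)/I\bigr)_{\ge reg(I)}=0$) and to ensure the generic coordinates are simultaneously generic for the hyperplane section $\bar I$, which is what makes the induction legitimate. Second, as written this is a plan rather than a proof: the regularity criterion (``$m$-regular iff $x_n$ almost regular in degrees $\ge m$ and the image mod $x_n$ is $m$-regular'') and the rlex commutation lemma are invoked, not established, and proving them is the real content of \cite{BaSt}. So the proposal is a faithful reconstruction of the known proof strategy, but it would need those two ingredients supplied in full to stand as a proof.
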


Thus the graded reverse lexicographic order is an optimal order in Gr\"obner base calculation. Conversely, we show that if $reg(I) = reg(gin_\tau (I))$ for all ideals $I \subset S$, then $\tau=\mathrm{revlex}$. This implies the unique optimality of the graded reverse lexicographic order. However, this does not show that general coordinates give the lowest regularity. If $I = (x^2+y^2,xyz) \subset S=K[x,y,z]$, we have $reg(in_\mathrm{lex}(I))= 4$ but $reg(gin_\mathrm{lex}(I))= 5$. Before we introduce the main theorem, we first characterize the graded reverse lexicographic order.

\begin{lem} \label{revlex}
$\tau = \mathrm{rlex}$ if and only if $x_{k-1}^{d+1} > x_1^d x_k$ for all $k, d$.
\end{lem}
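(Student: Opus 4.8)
The plan is to prove the two implications separately, the forward one being essentially a one-line computation and the converse carrying all the content.

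For the forward direction, assume $\tau = \mathrm{rlex}$. The monomials $x_{k-1}^{d+1}$ and $x_1^d x_k$ share the degree $d+1$, the largest index at which their exponent vectors differ is $k$, and at that index $x_1^d x_k$ carries the larger exponent; hence, by the definition of the graded reverse lexicographic order, $x_{k-1}^{d+1} >_\tau x_1^d x_k$ (still with $k$ as the relevant index when $k=2$). For the converse, let $\tau$ be a graded monomial order with $x_1 > \dots > x_n$ satisfying $x_{k-1}^{d+1} >_\tau x_1^d x_k$ for all $k,d$. A graded monomial order is determined by its comparisons among monomials of a single degree, so — both $\tau$ and $\mathrm{rlex}$ being total — it suffices to prove that $u >_{\mathrm{rlex}} v \Rightarrow u >_\tau v$ whenever $\deg u = \deg v$. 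I would first record two consequences of multiplicativity: (i) for $i<j$, replacing one factor $x_j$ of a monomial by $x_i$ strictly increases it in $\tau$ (a \emph{Borel move}), so in particular $x_{j-1}^{a+1}x_j^{b-1} >_\tau x_{j-1}^{a}x_j^{b}$; and (ii) therefore $x_{j-1}^e$ is the $\tau$-smallest and $x_1^e$ the $\tau$-largest monomial of degree $e$ in $K[x_1,\dots,x_{j-1}]$. Using the hypothesis only in the form $x_{j-1}^{D+1} >_\tau x_1^D x_j$, multiplying by $x_j^{m-1}$, and then applying Borel moves to lift $x_{j-1}^{D+1}x_j^{m-1}$ up to $x_{j-1}^{D+m}$, I obtain the key inequality $x_{j-1}^{D+m} >_\tau x_1^D x_j^m$ for all $j\ge 2$, $D\ge 0$, $m\ge 1$.

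The heart of the argument is then a reduction. Given $u >_{\mathrm{rlex}} v$ with $\deg u = \deg v = d$, let $j$ be the largest index with $\deg_{x_j} u \ne \deg_{x_j} v$; then $j \ge 2$ (forced since $u\ne v$ have equal degree), $a := \deg_{x_j} u < \deg_{x_j} v =: b$, and $u,v$ agree in every variable of index $>j$. Cancelling that common high-index factor by multiplicativity and dividing through by $x_j^a$, the inequality $u >_\tau v$ is equivalent to $u' >_\tau x_j^m v'$, where $m = b-a$ and $u',v' \in K[x_1,\dots,x_{j-1}]$ with $\deg u' = (d-b)+m$ and $\deg v' = d-b$. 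By (ii), $u' \ge_\tau x_{j-1}^{(d-b)+m}$ and $x_j^m v' \le_\tau x_1^{d-b}x_j^m$; by the key inequality, $x_{j-1}^{(d-b)+m} >_\tau x_1^{d-b}x_j^m$ strictly; transitivity then gives $u' >_\tau x_j^m v'$, as needed. I expect the main difficulty to lie not in any individual computation but in pinning down this reduction precisely — confirming $j\ge 2$, that the variables of index above $j$ genuinely cancel on both sides, that the $\mathrm{rlex}$ hypothesis forces $\deg_{x_j} u < \deg_{x_j} v$ at exactly this $j$, and that the strict middle link survives the two non-strict outer bounds — together with the short but slightly fiddly passage from the bare hypothesis to the key inequality $x_{j-1}^{D+m} >_\tau x_1^D x_j^m$; everything else is immediate from gradedness and multiplicativity.
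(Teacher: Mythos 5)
Your proposal is correct and follows essentially the same route as the paper: both directions reduce, after cancelling the common factor in high-index variables, to the sandwich $u' \ge_\tau x_{j-1}^{\deg u'} >_\tau x_1^{\deg v'}x_j^{m} \ge_\tau x_j^m v'$ using that $\tau$ refines the Borel order together with the hypothesis. The only cosmetic difference is that you first upgrade the hypothesis to $x_{j-1}^{D+m} >_\tau x_1^D x_j^m$ and keep $x_j^m$ on the right, while the paper applies it with $m=1$ and absorbs the extra powers of $x_j$ into the upper Borel bound; your degree bookkeeping ($\deg v' = d-b$) tacitly assumes the cancelled high-index tail is trivial, a harmless slip the paper shares.
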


\begin{proof}
One way is trivial. We show that if $x_{k-1}^{d+1} >_\tau x_1^d x_k$ for all $k$, then $\tau$ is the reverse lexicographic order. Let $f=x_1^{a_1} x_2^{a_2} \dots x_n^{a_n}$, $g=x_1^{b_1} x_2^{b_2} \dots x_n^{b_n}$ be degree $d+1$ polynomials. If $K$ is the largest $i$ such that $a_i \neq b_i$, Let $a_K < b_K$. We show that $f>_\tau g$.
 	
Since $\tau$ is multiplicative, the term order is preserved under factoring out common terms. We factor out $c=x_K^{a_K}$. Any monomial order $\tau$ with $x_1>_\tau \dots>_\tau x_n$ includes the Borel order in the way that if $M>_{Borel} N$ then $M>_\tau N$. We have 
$f/c = x_1^{a_1} \dots x_{K-1}^{a_{K-1}} >_\tau  x_{K-1}^{d+1-a_K}  >_\tau x_1^{d-a_K} x_K >_\tau x_1^{b_1} x_2^{b_2} \dots x_K^{b_K - a_K} = g/c$. Therefore, $f>_\tau g$. This is the defining property of the reverse lexicographic order. Hence $\tau$ is the reverse lexicographic order.
\end{proof}

\begin{lem} [Conca] \cite{ConcaKos} \label{lemconca}
Let $I$ be a Borel-fixed ideal and let $m_1, \dots, m_k$ be its monomial generators. Let $g \in GL(K)$ be a generic matrix. Then $g(I)$ is generated by polynomials $f_1, \dots, f_k$ of the form $f_i = m_i + h_i$ such that the monomials in $h_i$ are smaller than $m_i$ in the Borel-order. The polynomials $f_1,\dots,f_k$ form a Gr\"obner basis of $g(I)$ with respect to any term order.
\end{lem}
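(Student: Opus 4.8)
The plan is to reduce to the case of a lower unitriangular change of coordinates, and then to expand the images of the monomial generators directly. Let $B \subseteq GL(S_1)$ be the subgroup of matrices upper triangular with respect to $x_1 > \cdots > x_n$, i.e.\ those $b$ with $b(x_\ell) \in K\langle x_1,\dots,x_\ell\rangle$ for all $\ell$. The first thing to check is that $B$ stabilizes $I$: since the inverse of an invertible upper triangular matrix is upper triangular it suffices to see $b(I) \subseteq I$, and expanding $b(m)$ for a monomial $m \in I$ produces $m$ together with monomials obtained from $m$ by repeatedly replacing a factor $x_j$ by some $x_i$ with $i < j$, each of which lies in $I$ by the defining property of a Borel-fixed ideal. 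Next, by the LU decomposition the matrices with all leading principal minors nonzero form a dense open subset $U^- \cdot B$ of $GL(S_1)$, where $U^-$ is the group of lower unitriangular matrices; crucially this open set does not depend on a term order. Hence for generic $g$ we may write $g = Lb$ with $L \in U^-$ and $b \in B$, so that $g(I) = L(b(I)) = L(I)$, and it suffices to prove the statement for an arbitrary $L \in U^-$.

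For such an $L$, writing $L(x_\ell) = x_\ell + \sum_{j>\ell} c_{j\ell}\, x_j$, I would expand the image of a generator $m_i = x_1^{a_1}\cdots x_n^{a_n}$ as
\[
L(m_i) = \prod_{\ell=1}^{n}\Bigl(x_\ell + \sum_{j>\ell}c_{j\ell}\,x_j\Bigr)^{a_\ell}.
\]
Every monomial appearing on the right other than $m_i$ is obtained from $m_i$ by replacing one or more factors $x_\ell$ with a variable of strictly larger index, and is therefore strictly smaller than $m_i$ in the Borel order, while the coefficient of $m_i$ itself is $1$. So $f_i := L(m_i)$ has the form $m_i + h_i$ with every monomial of $h_i$ below $m_i$ in the Borel order, and $L(I) = (f_1,\dots,f_k)$ because $L$ extends to a $K$-algebra automorphism of $S$.

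To conclude, fix an arbitrary term order $\tau$, which by our standing convention has $x_1 >_\tau \cdots >_\tau x_n$. As recalled in the proof of Lemma \ref{revlex}, $M >_{Borel} N$ implies $M >_\tau N$; together with the expansion above this gives $in_\tau(f_i) = m_i$ for each $i$, hence $(m_1,\dots,m_k) \subseteq in_\tau(L(I))$. But $in_\tau(L(I))$ has the same Hilbert function as $L(I)$, which has the same Hilbert function as $I = (m_1,\dots,m_k)$ since $L$ is an invertible linear change of variables; a monomial ideal contained in a monomial ideal with the same Hilbert function must equal it, so $in_\tau(L(I)) = (m_1,\dots,m_k)$. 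This says precisely that $f_1,\dots,f_k$ is a Gr\"obner basis of $L(I) = g(I)$ with respect to $\tau$, and $\tau$ was arbitrary.

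The step requiring the most care is the reduction: one has to be sure that the Borel subgroup stabilizing $I$ is the one that pairs with the \emph{lower} unitriangular matrices in the LU decomposition, so that the extra monomials in $L(m_i)$ fall below $m_i$ in the Borel order rather than above it; reconciling the convention defining Borel-fixedness with the convention ordering the variables for $\tau$ is where essentially all of the bookkeeping lives. It is also worth noting why this route is better than working one term order at a time: the earlier fact that $gin_\tau(I) = I$ only yields, for each fixed $\tau$ and each generic $g$ in a $\tau$-dependent open set, \emph{some} Gr\"obner basis with leading terms $m_1,\dots,m_k$, whereas the LU decomposition isolates a single dense open set of $g$ that works for all term orders at once, which is what the statement requires. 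Everything past the reduction is formal, using only the multiplicativity of $\tau$ and the invariance of the Hilbert function under passing to initial ideals.
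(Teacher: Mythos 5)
Your proof is correct. Note that the paper itself gives no proof of this lemma --- it is quoted from Conca \cite{ConcaKos} --- so there is no internal argument to compare against; your route (reduce a generic $g$ via the LU factorization on the dense open set of matrices with nonvanishing leading principal minors to a lower unitriangular $L$ acting on the Borel-fixed ideal, expand $L(m_i)$ to get $m_i$ plus Borel-smaller terms, then conclude the Gr\"obner basis property from $(m_1,\dots,m_k)\subseteq in_\tau(L(I))$ and equality of Hilbert functions) is the standard argument for this statement and is essentially Conca's. Your closing observation is also the right one: the point of the triangular factorization is that it produces a single dense open set of $g$ valid for every term order simultaneously, which is exactly what the lemma asserts and what a term-order-by-term-order appeal to $gin_\tau(I)=I$ would not give directly.
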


Now we prove our main theorem.

\begin{thm}
If $reg(gin_{\tau}(I))=reg(gin_{rlex}(I))$ for all ideals $I \subset S$, then $\tau= \textrm{rlex}$.
\end{thm}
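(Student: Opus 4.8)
The plan is to prove the contrapositive: assuming $\tau \neq \mathrm{rlex}$, I will construct an ideal $I$ with $reg(gin_\tau(I)) > reg(gin_{\mathrm{rlex}}(I))$. By Lemma~\ref{revlex}, if $\tau \neq \mathrm{rlex}$ then there exist $k$ and $d$ with $x_1^d x_k >_\tau x_{k-1}^{d+1}$. The idea is to manufacture a $\tau$-segment ideal that, by Lemma~\ref{thelem}, equals its own generic initial ideal, but whose reverse-lexicographic generic initial ideal has strictly smaller regularity. The natural candidate is built around the two monomials $x_1^d x_k$ and $x_{k-1}^{d+1}$: one wants $gin_\tau$ to be forced to contain a generator of high degree (roughly $d+1$ or more, coming from a $\tau$-segment in degree $d$ whose colon with $x_n$ does not grow to all monomials), while $gin_{\mathrm{rlex}}$ of the same ideal, or of a companion ideal with the same Hilbert function, has all generators in degree $\le d$.

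The key steps, in order, would be: (1) Use Lemma~\ref{revlex} to extract the witnessing pair $(k,d)$ with $x_1^d x_k >_\tau x_{k-1}^{d+1}$, and choose $k$ minimal so that no such inequality holds below it. (2) Define $I$ to be the $\tau$-segment ideal generated in degree $d$ by all degree-$d$ monomials that are $\geq_\tau x_1^d x_k$; since this is a $\tau$-segment, Lemma~\ref{thelem} gives $gin_\tau(I) = I = in_\tau(I)$, so $reg(gin_\tau(I))$ is the top degree of a minimal generator of this segment ideal. (3) Analyze the minimal generators of this monomial ideal: because $x_{k-1}^{d+1}$ is $<_\tau x_1^d x_k$, the monomial $x_{k-1}^{d+1}$ is \emph{not} in $I$, which forces $I$ to have a minimal generator in degree $\ge d+1$ — one shows $x_{k-1} \cdot (\text{something of degree } d \text{ in } I) $ fails to cover all degree-$(d+1)$ monomials needed, so $reg(I) \ge d+1$. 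Actually more care is needed: I would identify an explicit minimal generator of degree exactly $d+1$ (or compute it is some $r \ge d+1$) by looking at the socle-type behavior of the segment. (4) Compute $reg(gin_{\mathrm{rlex}}(I))$: since $reg(gin_{\mathrm{rlex}}(I)) = reg(I)$ by Bayer–Stillman, and $reg(I)$ for this monomial ideal is exactly the top generating degree, I need the \emph{reverse lex} segment with the same Hilbert function in degree $d$ to be generated in degree $d$ only — equivalently, that the rlex-segment of the appropriate size is ``Gotzmann'' / persists, which holds because rlex-segments in a single degree generate an ideal whose only new generators are in that degree precisely when the segment is an initial segment of the right length. The clean version: replace $I$ by the ideal $J$ generated by the rlex-segment of the same dimension as $I_d$; then $reg(J) = d$ while $reg(gin_\tau(I)) \ge d+1$. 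Using $J$ (a genuine ideal, rlex-segment, hence Borel-fixed, hence its own gin for every order by Conca's Lemma~\ref{lemconca}) lets me compare $reg(gin_\tau(J))$ against $reg(gin_{\mathrm{rlex}}(J)) = reg(J) = d$; but I must re-examine that $in_\tau(J)$ is still a $\tau$-segment — it need not be, so this requires the more delicate construction of the previous theorem, taking $I = (\text{first } r-1 \text{ rlex monomials}, \, M_r + M_r')$ style perturbations, or simply using that $gin_\tau(J)$ must contain the $\tau$-segment of size $\dim J_d$ in degree $d$, which contains $x_1^d x_k$ but not $x_{k-1}^{d+1}$, forcing the high-degree generator.

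The main obstacle I expect is step (4) combined with the generator-degree bookkeeping in step (3): showing that the $\tau$-segment ideal (or the relevant $gin_\tau$) genuinely acquires a minimal generator in degree strictly above $d$, while simultaneously arranging the Hilbert function so that the rlex side stays at regularity $d$. The inequality $x_1^d x_k >_\tau x_{k-1}^{d+1}$ says the $\tau$-segment ``skips'' the monomial $x_{k-1}^{d+1}$ while including monomials that are rlex-smaller; in rlex terms this is exactly a failure of the Macaulay/Gotzmann growth that would otherwise keep the ideal with no new generators in higher degree. Making this quantitative — pinning down that the $\tau$-colon ideal $(I_d : x_n)$ or $(I_d : x_{k-1})$ is a proper subspace of $S_d$, hence $I$ does not ``saturate'' in degree $d$, hence $reg(I) > d$ on the $\tau$ side — is the technical heart. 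A secondary subtlety is ensuring the chosen witness $(k,d)$ can be taken with $k \le n$ and $d$ large enough that $x_1^d x_k$ is not the very top or bottom of its degree, so that the segment is a proper nonzero ideal; this is handled by the minimality choice of $k$ together with the observation that the Borel order is refined by every admissible $\tau$.
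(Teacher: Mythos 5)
There is a genuine gap, and it sits at the heart of your construction. In step (2) you take the witness ideal $I$ to be the $\tau$-segment ideal generated by all monomials $\geq_\tau x_1^d x_k$ in that degree. But by Lemma 3.5 every $\tau$-segment ideal is Borel-fixed, and by Conca's theorem a Borel-fixed ideal satisfies $gin_\sigma(I)=I$ for \emph{every} term order $\sigma$; hence $reg(gin_\tau(I))=reg(I)=reg(gin_{\mathrm{rlex}}(I))$ and no monomial segment ideal (nor the rlex-segment companion $J$ of step (4)) can ever separate $\tau$ from rlex. The same objection kills the fallback claim that ``$gin_\tau(J)$ must contain the $\tau$-segment of size $\dim J_d$'': for Borel-fixed $J$ one has $gin_\tau(J)=J$, which is an rlex-segment, not a $\tau$-segment. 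Step (3) is also not salvageable on its own terms: a segment ideal generated in a single degree is Borel-fixed, so (in characteristic $0$) its regularity equals the maximal degree of a minimal generator, i.e.\ exactly that single degree; there is no forced generator in higher degree and no colon/saturation obstruction to exploit.

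What is missing is the passage to a \emph{non-monomial} witness, which is exactly how the paper proceeds: with $(k,d)$ from Lemma \ref{revlex}, take $I$ generated by all degree-$(d+1)$ monomials of $K[x_1,\dots,x_{k-1}]$ except $x_{k-1}^{d+1}$, together with the binomial $x_{k-1}^{d+1}+x_1^d x_k$. Under rlex one checks this is already a Gr\"obner basis, $in_{\mathrm{rlex}}(I)=(x_1,\dots,x_{k-1})^{d+1}$ is an rlex-segment ideal, and Lemma \ref{thelem} gives $reg(gin_{\mathrm{rlex}}(I))=d+1$. Under $\tau$, in generic coordinates the leading term of the binomial generator becomes $x_1^d x_k$ (an exterior-power/Pl\"ucker coefficient argument), and the syzygy $x_{k-1}(x_1^dx_k+x_{k-1}^{d+1})-x_k(x_1^dx_{k-1})=x_{k-1}^{d+2}$ is not reducible, so $gin_\tau(I)$ acquires a minimal generator in degree $d+2$ and $reg(gin_\tau(I))\ge d+2$. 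Your closing remark about ``$M_r+M_r'$ style perturbations'' points in this direction, but the perturbation of Theorem 3.8 by itself only shows the two gins differ; the regularity gap requires precisely this extra syzygy mechanism producing the degree-$(d+2)$ generator, together with the exact computation of the rlex side, neither of which your proposal supplies.
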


\begin{proof}
Suppose $\tau \neq \textrm{rlex}$. By Lemma \ref{revlex}, there exists some $k,d$ such that $x_1^d x_k > x_{k-1}^{d+1}$. We show that $reg(gin_{\textrm{rlex}}(I)) \neq reg(gin_\tau(I))$ for the ideal $I = (x_1^{d+1},\dots,x_{k-2} x_{k-1}^d, x_{k-1}^{d+1} + x_1^d x_k)$. This ideal $I$ is generated by $x_{k-1}^{d+1} + x_1^d x_k$ and all degree $d+1$ monomials in $K[x_1,\dots,x_{k-1}]$ except $x_{k-1}^{d+1}$.

First, consider the graded reverse lexicographic case. Let $x_1^{d+1} = M_1 >_\textrm{rlex} M_2 >_\textrm{rlex} \dots >_\textrm{rlex} M_{L+1} = x_{k-1}^{d+1}$ be the total ordering of degree $d+1$ monomials in $K[x_1,\dots,x_{k-1}]_{d+1}$. Then we can write $I = (M_1,\dots,M_L, x_{k-1}^{d+1} + x_1^d x_k)$. We use Buchberger's algorithm and show that no syzygy is added to the Gr\"obner base. The syzygies for the first $L$ generators are $0$. Also for any possible syzygy $S=f_1 M_i-f_2(x_{k-1}^{d+1}+x_1^d x_k)=f_2 x_1^d x_k$, we have $f_2 x_1^d x_k \in (x_1,\dots,x_{k-1})^{d+1}$ since $f_2 | M_i$ and $M_i \in (x_1,\dots,x_{k-1})$. Therefore, $\{M_1,\dots, M_L, x_{k-1}^{d+1} + x_1^d x_k \}$ is a Gr\"obner base of $I$. Consequently, $in_\mathrm{rlex}(I)=(x_1,\dots,x_{k-1})^{d+1}$. Since this is a rlex-segment ideal, we have $gin_\textrm{rlex}(I) = (x_1,\dots,x_{k-1})^{d+1}$ by Lemma \ref{thelem}. Then $reg(gin_\mathrm{rlex}(I))=d+1$, which is the maximum degree of a minimal generator of $gin_\textrm{rlex}(I)$.

Now, let $\tau \neq \textrm{rlex}$ with $x_1^d x_k >_\tau x_{k-1}^{d+1}$. Let $I' = (M_1, \dots, M_L)$ and $M_0=x_1^d x_k + x_{k-1}^{d+1}$. Then, $in_\tau(g(\wedge^{r+1} I_{d+1}) ) = in_\tau(g(M_1)\wedge g(M_2)\wedge \dots \wedge g(M_L) \wedge g(M_0))$. Take $g$ a general coordinate for $I_{d+1}$ and $I'_{d+1}$. Since $I'$ is Borel-fixed, $in_\tau(g( \wedge^r I'_{d+1})) = M_1 \wedge \dots \wedge M_L$. This means that $g(M_1) \wedge \dots \wedge g(M_L) = P(g) (M_1 \wedge \dots \wedge M_L) + \mathrm{(lower \ terms)}$ for $P(g) \neq 0$. We take $g$ generic such that $g(M_0)$ has nonzero coefficients for all degree $d+1$ monomials. This can be done by expanding $g(M_0)$ and taking the coordinate transformation avoiding the zero locus of each coefficient of the monomial terms. Since $x_1^d x_k$ is the largest degree $d+1$ monomial besides $M_1,\dots,M_L$, we obtain $in_\tau(g(\wedge^{r+1} I_{d+1}) )=M_1 \wedge \dots \wedge M_L \wedge x_1^d x_k$. This exterior monomial may not be in standard form because we don't know the order in $\tau$.

We observe that $S=x_{k-1}^{d+2} = x_{k-1} (x_1^d x_k + x_{k-1}^{d+1}) - x_k (x_1^d x_{k-1}) \in I$. Then we add this redundant basis so that $I = (M_1,\dots,M_L,M_0,x_{k-1}^{d+2})$. Let $J = (M_1, \dots, M_L, x_{k-1}^{d+2})$ then $J$ is Borel-fixed. By Lemma \ref{lemconca}, $\mathcal{G}(g(J)) = \{ M_1+N_1, \dots, M_L+N_L, x_{k-1}^{d+2} + N_{L+1} \}$ where the $N_i$ are linear sums of terms smaller than $M_i$ in Borel-order. Then we have $g(I) = (M_1+N_1, \dots, M_L+N_L, g(M_0), x_{k-1}^{d+2}+N_{L+1})$.

Since we have shown that $in_\tau(g(\wedge^{r+1} I_{d+1}) )=M_1 \wedge \dots \wedge M_L \wedge x_1^d x_k$, we rewrite this as $g(I) = (M_1+N_1, \dots, M_L+N_L, x_1^d x_k + N_0 , x_{k-1}^{d+2}+N_{L+1})$. The syzygy $S = x_{k-1}(x_1^d x_k + x_{k-1}^{d+1}) - x_k (x_1^d x_{k-1}) = x_{k-1}^{d+2}$ in $I$ is not reducible by $M_1, \dots, M_L, M_0$ using $\tau$. Since the initial terms of the generators of $g(I)$ and $I$ coincide, we also cannot reduce $x_{k-1}^{d+2}+N_{L+1}$ by lower degree generators of $g(I)$. Hence, this is a proper Gr\"obner base of $g(I)$. Consequently, $gin_\tau(I) = in_\tau(g(I))$ has a generator of degree $d+2$ and therefore has regularity $\ge d+2$.
\end{proof}

\begin{ex}
Let $S=K[x_1,\dots,x_6]$ and $I = (x_1^{3},x_1^2 x_2, x_1 x_2^2, x_2^3 + x_1^2 x_3)$. Then, $gin_\mathrm{lex}(I)=(x_1^{3},x_1^2 x_2, x_1 x_2^2, x_1^3 x_3)+(x_2^{4})$ and $gin_\textrm{rlex}(I) = (x_1^3, x_1^2 x_2, x_1 x_2^2 ,x_2^3)$. Hence the regularities are $reg(gin_\mathrm{lex}(I)) = 4, reg(gin_\mathrm{rlex}(I))=3$.
\end{ex}

Using the theorem, we directly obtain the converse statement of Bayer and Stillman.

\begin{cor}
If $reg(gin_\tau(I))=reg(I)$ for all ideals $I \subset S$, then $\tau = \textrm{rlex}$.
\end{cor}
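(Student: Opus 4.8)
The plan is to deduce the corollary immediately from the main theorem (Theorem 4.4 in the excerpt) together with the Bayer--Stillman theorem. The key observation is that Bayer--Stillman provides a ``reference value'' for the regularity of a generic initial ideal: for \emph{every} homogeneous ideal $I$ we have $reg(gin_{\mathrm{rlex}}(I)) = reg(I)$. So the hypothesis $reg(gin_\tau(I)) = reg(I)$ for all $I$ can be rewritten, using Bayer--Stillman to substitute $reg(I) = reg(gin_{\mathrm{rlex}}(I))$, as $reg(gin_\tau(I)) = reg(gin_{\mathrm{rlex}}(I))$ for all $I$. That is precisely the hypothesis of Theorem 4.4, whose conclusion is $\tau = \mathrm{rlex}$.

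First I would invoke the Bayer--Stillman theorem to record the identity $reg(gin_{\mathrm{rlex}}(I)) = reg(I)$ valid for all homogeneous $I \subset S$. Next I would combine this with the standing hypothesis of the corollary, namely $reg(gin_\tau(I)) = reg(I)$ for all $I$, to conclude $reg(gin_\tau(I)) = reg(gin_{\mathrm{rlex}}(I))$ for all $I$. Finally I would apply Theorem 4.4 verbatim to this last statement to get $\tau = \mathrm{rlex}$, completing the proof.

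There is essentially no obstacle here: the corollary is a formal consequence, and the genuine work was already done in Theorem 4.4 (constructing, for each $\tau \neq \mathrm{rlex}$, an ideal $I$ with $reg(gin_\tau(I)) \geq d+2 > d+1 = reg(gin_{\mathrm{rlex}}(I))$). The only thing worth being careful about is that the chain of substitutions is logically in the right direction: we are given an equality of regularities for all $I$, and we only need to \emph{feed} this (after the Bayer--Stillman rewrite) into Theorem 4.4, not the converse. A one-line proof suffices.

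\begin{proof}
By the Bayer--Stillman theorem, $reg(gin_{\mathrm{rlex}}(I)) = reg(I)$ for every homogeneous ideal $I \subset S$. Combining this with the hypothesis $reg(gin_\tau(I)) = reg(I)$ for all $I$, we obtain $reg(gin_\tau(I)) = reg(gin_{\mathrm{rlex}}(I))$ for all ideals $I \subset S$. By Theorem 4.4, this forces $\tau = \mathrm{rlex}$.
\end{proof}
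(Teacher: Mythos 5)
Your proposal is correct and is essentially the paper's own argument: the paper likewise deduces the corollary by substituting the Bayer--Stillman identity $reg(gin_{\mathrm{rlex}}(I))=reg(I)$ into the hypothesis and then invoking the main theorem. Nothing further is needed.
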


\begin{proof}
This follows from the result of Bayer-Stillman: $reg(gin_{rlex}(I))=reg(I)$ \cite{BaSt}.
\end{proof}

\end{document}